\date{}
\newcommand{\re}{\mathbb{R}}
\newcommand{\Span}{\operatorname{Span}}
\newtheorem{thm}{Theorem}[section]
\newtheorem{rmk}[thm]{Remark}
\newtheorem{prop}[thm]{Proposition}
\newtheorem{defn}[thm]{Definition}
\newtheorem{lemma}[thm]{Lemma}
\title{A road map to the blow-up for a Kirchhoff equation with external force}
\author{Marina Ghisi\vspace{1ex}\\ 
{\normalsize Università degli Studi di Pisa} \\
{\normalsize Dipartimento di Matematica}\\ 
{\normalsize PISA (Italy)}\\
{\normalsize e-mail: \texttt{marina.ghisi@unipi.it}}
\and
Massimo Gobbino\vspace{1ex}\\ 
{\normalsize Università degli Studi di Pisa} \\
{\normalsize Dipartimento di Matematica}\\ 
{\normalsize PISA (Italy)}\\  
{\normalsize e-mail: \texttt{massimo.gobbino@unipi.it}}
}
\begin{document}
\maketitle

\begin{abstract}

It is well-known that the classical hyperbolic Kirchhoff equation admits infinitely many simple modes, namely time-periodic solutions with only one Fourier component in the space variables. 

In this paper we assume that, for a suitable choice of the nonlinearity, there exists a heteroclinic connection between two simple modes with different frequencies. Under this assumption, we cook up a forced Kirchhoff equation that admits a solution that blows-up in finite time, despite the regularity and boundedness of the forcing term.

The forcing term can be chosen with the maximal regularity that prevents the application of the classical global existence results in analytic and quasi-analytic classes.

\vspace{6ex}

\noindent{\bf Mathematics Subject Classification 2020 (MSC2020):} 
35B44, 37J46, 35L90, 35L72.

		
\vspace{6ex}

\noindent{\bf Key words:} 
hyperbolic Kirchhoff equation, simple modes, heteroclinic connection, blow up, quasi-analytic functions.

\end{abstract}

 
\section{Introduction}

Let $H$ be a real Hilbert space, and let $A$ be a positive self-adjoint operator on $H$ with dense domain $D(A)$. Let $m:[0,+\infty)\to[0,+\infty)$ and $f:[0,+\infty)\to H$ be two continuous functions. In this paper we consider the forced evolution equation 
\begin{equation}
u''(t)+m\left(|A^{1/2}u(t)|^{2}\right)Au(t)=f(t)
\label{K:eqn}
\end{equation}
with initial data
\begin{equation}
u(0)=u_{0},
\qquad\qquad
u'(0)=u_{1}.
\label{K:data}
\end{equation}

Equation (\ref{K:eqn}) is an abstract version of the hyperbolic partial differential equation introduced by  G.~Kirchhoff in the celebrated monograph~\cite[Section~29.7]{Kirchhoff} as a model for the small transversal vibrations of elastic strings or membranes.

\paragraph{\textmd{\textit{Local and global existence results}}}

Existence of solutions to problem (\ref{K:eqn})--(\ref{K:data}) has been deeply investigated in the literature. For the sake of shortness, unless otherwise stated, here we limit ourselves to recall the main results for the case in which the nonlinearity is locally Lipschitz continuous and satisfies the strict hyperbolicity assumption
\begin{equation}
	m(\sigma)\geq\mu_{1}>0
	\quad\quad
	\forall\sigma\geq 0.
	\label{hp:m-sh}
\end{equation}

Under these assumptions, problem (\ref{K:eqn})--(\ref{K:data}) admits a local-in-time strong solution
\begin{equation}
u\in C^{0}\left([0,T],D(A^{3/4})\right)\cap C^{1}\left([0,T],D(A^{1/4})\right)
\label{defn:strong-sol}
\end{equation}
provided that
\begin{equation}
(u_{0},u_{1})\in D(A^{3/4})\times D(A^{1/4})
\qquad\text{and}\qquad
f\in C^{0}\left([0,+\infty),D(A^{1/4})\right),
\nonumber
\end{equation}
and this solution is unique in the class of strong solutions, namely solutions with the regularity (\ref{defn:strong-sol}). This result was substantially established by S.~Bernstein in the pioneering paper~\cite{1940-Bernstein}, and then refined by many authors (see~\cite{1996-TAMS-AroPan} for a modern version).

Global-in-time strong solutions are known to exist in many different special cases, which we briefly describe below.

\begin{enumerate}

\item  (Analytic case). Problem (\ref{K:eqn})--(\ref{K:data}) admits a global solution if both the initial data and the forcing term are analytic with respect to the space variables. Actually in this case it is enough to assume that the nonlinearity $m$ is just continuous and nonnegative. We refer to~\cite{1940-Bernstein,1984-RNM-AroSpa,1992-InvM-DAnSpa,1992-Ferrara-DAnSpa} for more details.

\item  (Quasi-analytic case). Problem (\ref{K:eqn})--(\ref{K:data}) admits a global solution if both the initial data and the forcing term are quasi-analytic with respect to the space variables. This is not just a refinement of the analytic case, because here the known proofs require in an essential way the Lipschitz continuity and the strict hyperbolicity of the nonlinearity (see~\cite{1984-Tokyo-Nishihara,gg:K-Nishihara}).

\item  (Special nonlinearities). In the case where $m(\sigma)=(a+b\sigma)^{-2}$ for some positive real numbers $a$ and $b$, problem (\ref{K:eqn})--(\ref{K:data}) admits a global solution provided that
\begin{equation}
(u_{0},u_{1})\in D(A)\times D(A^{1/2})
\qquad\text{and}\qquad
f\in C^{0}\left([0,+\infty),D(A)\right).
\nonumber
\end{equation}

The technical reason is that in this case (and in some sense only in this case) the equation admits a higher order quantity whose growth can be controlled for all positive times. We refer to~\cite{1985-Pohozaev} for the details.

\item  (Dispersive equations). Global existence results have been obtained in the concrete case where $A$ is the usual Laplace operator in the whole space $\re^{d}$ or in an external domain. The prototype of these results is global existence provided that the initial data and the forcing term have Sobolev regularity in the space variables, and satisfy suitable smallness assumptions and decay conditions at infinity. We refer to~\cite{1980-QAM-GreHu,1993-ARMA-DAnSpa,2005-JDE-Yamazaki,2013-JMPA-MatRuz} for precise statements.	

\item  (Spectral-gap data and operators). Global existence results are known in cases where both the initial data and the forcing term are ``lacunary'', in the sense that their spectrum contains a sequence of large ``holes''. The same is true whenever the eigenvalues of the operator $A$ are a sequence that grows fast enough. We refer to~\cite{2005-JDE-Manfrin,2006-JDE-Hirosawa,gg:K-Manfrinosawa,gg:K-Nishihara,2015-NLATMA-Hirosawa} for precise statements.	For the sake of completeness, we point out that the spectral gap theory has been recently extended in order to show the existence of global weak solutions in the energy space $D(A^{1/2})\times H$ (see~\cite{gg:EnergySpaceSG}).

\end{enumerate}

The main open problem for Kirchhoff equations is the existence of global solutions for initial data and forcing terms below the analytic or quasi-analytic regularity, for example in Gevrey spaces or in the Sobolev spaces $D(A^{\alpha})$.

\paragraph{\textmd{\textit{Simple modes}}}

Let us consider the unforced equation
\begin{equation}
u''(t)+m\left(|A^{1/2}u(t)|^{2}\right)Au(t)=0.
\label{K:eqn-f=0}
\end{equation}

If $e_{k}$ is an eigenvector of $A$ with eigenvalue $\lambda_{k}^{2}>0$, and both $u_{0}$ and $u_{1}$ are multiples of $e_{k}$, for example $u_{0}=\alpha e_{k}$ and $u_{1}=\beta e_{k}$, then the solution to (\ref{K:eqn-f=0})--(\ref{K:data}) remains a multiple of $e_{k}$ for all times, and more precisely $u(t)=z(t)e_{k}$, where $z(t)$ is the solution to the ordinary differential equation
\begin{equation}
z''(t)+\lambda_{k}^{2}m(\lambda_{k}^{2}z(t)^{2})z(t)=0
\nonumber
\end{equation}
with initial data $z(0)=\alpha$ and $z'(0)=\beta$.

These special solutions are called \emph{simple modes}, and it is well-known that they are time-periodic. Their stability has been studied deeply in the literature (see~\cite{1980-Dickey,1996-QuartAM-CazWei,gg:stability,gg:Toulouse,2003-EJDE-Ghisi}). In particular, there are many examples of unstable simple modes, and when this is the case there exist periodic trajectories that are asymptotic to them as $t\to +\infty$ or as $t\to -\infty$. What is not know yet is whether the stable manifold of a simple mode can intersect the unstable manifold of a different simple mode. This intersection would deliver a trajectory that is asymptotic,  as $t\to -\infty$ and as $t\to +\infty$, to two simple modes corresponding to different frequencies. Such a trajectory, which we call \emph{heteroclinic connection}, would realize a transfer of the energy from a low frequency to a higher frequency (due to reversibility we can always change the verse of time).

\paragraph{\textmd{\textit{Our result}}}

In this paper we \emph{assume} that, for some choice of the nonlinearity $m$, the unforced equation (\ref{K:eqn-f=0}), in the special case where $H=\re^{2}$ and $A$ is an operator with two eigenvalues equal to~$1$ and $\lambda^{2}>1$, admits a heteroclinic connection between the simple modes corresponding to the two eigenvalues (see Definition~\ref{defn:HCA}). 

Under this assumption, in Theorem~\ref{thm:main} we show that in every infinitely dimensional Hilbert space $H$ there exists an operator $A$ and an external force $f(t)$, smooth for every $t\geq 0$, for which the forced equation (\ref{K:eqn}) admits a solution that blows-up in a finite time $T_{\infty}$. By ``blow-up'' we mean that the pair $(u(t),u'(t))$ does not admit a limit as $t\to T_{\infty}^{-}$ in the energy space $D(A^{1/2})\times H$ (where the solution is necessarily bounded), while all higher order norms of the form 
\begin{equation}
|A^{\alpha}u'(t)|^{2}+|A^{\alpha+1/2}u(t)|^{2}
\qquad
\text{(with $\alpha>0$)}
\nonumber
\end{equation}
tend to $+\infty$ as $t\to T_{\infty}^{-}$.

The initial datum of this solution has only one Fourier component, and in particular it is analytic, and hence necessarily the forcing term does not lie in any analytic or quasi-analytic class, because otherwise the solution would be global. Nevertheless, the forcing term can be chosen to lie in any class that is less than quasi-analytic, and in particular in all Gevrey spaces $\mathcal{G}_{s}$ with $s>1$. In other words, the existence of a heteroclinic connection would imply that the classical global existence result in quasi-analytic classes is optimal.

\paragraph{\textmd{\textit{Overview of the technique}}}

Our proof involves three main steps.
\begin{itemize}

\item  In the first step (Proposition~\ref{prop:transition}) we consider the heteroclinic connection that we assumed to exist, and we show that for every interval $[a,b]$ we can modify it in order to obtain a new trajectory that \emph{coincides} with the first limiting simple mode for every $t\leq a$, and \emph{coincides} with the second limiting simple mode for every $t\geq b$. We think of this new trajectory as a sort of bridge that connects the two simple modes in the interval $[a,b]$. In general this bridge is not a solution of an unforced Kirchhoff equation, but rather of a Kirchhoff equation with a forcing term whose size depends on the length of the interval (the longer is the length of the bridge, the smaller is the external force required).

\item  In the second step (Proposition~\ref{prop:iteration}) we exploit a natural rescaling property of simple modes, and from the bridge between the frequencies 1 and $\lambda^{2}$ we obtain a bridge between the frequencies $\lambda^{2}$ and $\lambda^{4}$, and more generally between the frequencies $\lambda^{2k}$ and $\lambda^{2k+2}$.

\item  In the third step we consider an operator that admits the sequence $\lambda^{2k}$ among its eigenvalues, and we consider the solution obtained by glueing all the bridges. This solution moves the energy toward higher and higher frequencies. More precisely, there exists an increasing sequence $\{T_{k}\}$ of times with the property that at time $T_{k}$ the solution coincides with the simple mode corresponding to the frequency $\lambda^{2k}$. The key point is that we can arrange things in such a way that $T_{k}$ converges to some finite time $T_{\infty}$, and the size of the corresponding forcing terms vanishes as $t\to T_{\infty}^{-}$.

\end{itemize}

In summary, the existence of a heteroclinic connection between the frequencies~1 and $\lambda^{2}$ for the unforced equation (\ref{K:eqn-f=0}) implies the existence of a heteroclinic connection between any two ``consecutive'' frequencies $\lambda^{2k}$ and $\lambda^{2k+2}$, again for the unforced equation. Thanks to a suitable external force, we can switch from one connection to the next one, and in this way we obtain a trajectory that visits all frequencies. We can keep the external force small, and actually vanishing as $t\to T_{\infty}^{-}$, because the bulk of the work is done by the nonlinearity, and the only role of the external force is to put the solution on the right track from time to time.

It could be interesting to realize a similar path without the aid of the external force, but relying only on a suitable choice of initial data. This remains a challenging open problem.

\paragraph{\textmd{\textit{Consequences}}}

Our result in some sense proves nothing, because we do not know yet whether a heteroclinic connection exists or not.

If one believes that Kirchhoff equations are not well-posed in Sobolev or Gevrey spaces, we have reduced the search of a counterexample to the existence of a special trajectory for a Hamiltonian system in dimension two. We do hope that the community working on dynamical systems could contribute in this direction.

On the contrary, if one believes that Kirchhoff equations do admit global solutions for all data in Sobolev or Gevrey spaces, our result shows that the proof has to exclude the existence of heteroclinic connections, and therefore it is very likely that it has to involve some ``global'' property of the nonlinearity.

\paragraph{\textmd{\textit{Structure of the paper}}}

This paper is organized as follows. In section~\ref{sec:statements} we present formally the heteroclinic connection assumption and we state our main result. In section~\ref{sec:proofs} we prove the main result. 


\setcounter{equation}{0}
\section{Statements}\label{sec:statements}

Let us state formally the main assumption of this paper.

\begin{defn}[Heteroclinic connection assumption]\label{defn:HCA}
\begin{em}

Let $m:[0,+\infty)\to(0,+\infty)$ be a function of class $C^1$ satisfying the strict hyperbolicity assumption (\ref{hp:m-sh}), and let $\lambda>1$ be a real number. We say that the pair $(m,\lambda)$ satisfies the \emph{heteroclinic connection assumption} if there exist two functions $v:\re\to\re$ and $w:\re\to\re$ of class $C^2$ with the following properties.
\begin{itemize}

\item  They satisfy the non-triviality condition
\begin{equation}
u'(0)^{2}+v'(0)^{2}+u(0)^{2}+v(0)^{2}>0.
\label{hp:HCA-nontrivial}
\end{equation}

\item They solve the system of ordinary differential equations
\begin{equation}
\begin{cases}
v''(t)+m\left(v(t)^2+\lambda^{2}w(t)^{2}\strut\right)v(t)=0 
\\
w''(t)+\lambda^{2}m\left(v(t)^2+\lambda^{2}w(t)^{2}\strut\right)w(t)=0 
\end{cases}
\qquad
\forall t\in\re.
\label{defn:HC-system}
\end{equation}

\item There exist two positive real numbers $A_{0}$ and $B_{0}$ such that
\begin{gather}
|v'(t)|^{2}+|v(t)|^{2}\leq B_{0}\exp(-A_{0}t)
\qquad
\forall t\geq 0,
\label{hp:hc-v-decay}
\\[0.5ex]
|w'(t)|^{2}+\lambda^{2}|w(t)|^{2}\leq B_{0}\exp(-A_{0}|t|)
\qquad
\forall t\leq 0.
\label{hp:hc-w-decay}
\end{gather}

\end{itemize}
\end{em}
\end{defn}

Before moving to our main result, we need to recall the usual notion of Gevrey spaces with respect to an operator.

\begin{defn}[Gevrey spaces]
\begin{em}

Let $H$ be a real Hilbert space, let $\{e_{k}\}_{k\geq 0}\subseteq H$ be a sequence of orthonormal vectors (not necessarily a basis), let $\{\lambda_{k}\}_{k\geq 0}$ be a sequence of positive real numbers, and let $A$ be a linear operator on $H$ such that
\begin{equation}
Ae_{k}=\lambda_{k}^{2}e_{k}
\qquad
\forall k\geq 0.
\nonumber
\end{equation}

Given two positive real numbers $r$ and $s$, and a vector $u\in H$, we say that $u$ is a Gevrey vector with exponent $s$ and radius $r$, and we write $u\in\mathcal{G}_{r,s}(A)$, if
\begin{equation}
u=\sum_{k=0}^{\infty}\langle u,e_{k}\rangle e_{k}
\qquad\text{and}\qquad
\|u\|_{\mathcal{G}_{r,s}(A)}^{2}:=
\sum_{k=0}^{\infty}\langle u,e_{k}\rangle^{2}\exp\left(r\lambda_{k}^{1/s}\right)<+\infty.
\nonumber
\end{equation}

\end{em}
\end{defn} 
We recall that $\|u\|_{\mathcal{G}_{r,s}(A)}$ induces a structure of Hilbert space on the set $\mathcal{G}_{r,s}(A)$, and that the case $s=1$ corresponds to analytic vectors. More general spaces can be introduced by considering different functions of $\lambda_{k}$ in the exponential.

We are now ready to state our main result.


\begin{thm}[From a heteroclinic connection to a blow-up for a Kirchhoff equation]\label{thm:main}

Let $m:[0,+\infty)\to(0,+\infty)$ be a function of class $C^1$ satisfying the strict hyperbolicity assumption (\ref{hp:m-sh}), and let $\lambda>1$ be a real number. Let us assume that the pair $(m,\lambda)$ satisfies the heteroclinic connection assumption of Definition~\ref{defn:HCA}. 

Let $H$ be a Hilbert space, and let $A$ be a self-adjoint linear operator on $H$ for which there exists a sequence $\{e_{k}\}_{k\geq 0}\subseteq H$ of orthonormal vectors (not necessarily a basis) such that
\begin{equation}
Ae_{k}=\lambda^{2k}e_{k}
\qquad
\forall k\geq 0.
\nonumber
\end{equation}

Then there exist a function $f:[0,+\infty)\to H$ such that
\begin{equation}
f\in C^{0}\left([0,+\infty),\mathcal{G}_{s,r}(A)\right)
\qquad
\forall s>1,
\quad
\forall r>0,
\label{th:reg-f}
\end{equation}
a real number $T_{\infty}$, and a solution 
\begin{equation}
u\in C^{2}\left([0,T_{\infty}),\mathcal{G}_{s,r}(A)\right)
\qquad
\forall s>0,
\quad
\forall r>0,
\label{th:reg-u}
\end{equation}
to equation (\ref{K:eqn}) such that
\begin{equation}
\limsup_{t\to T_{\infty}^{-}}|A^{\alpha}u'(t)|^{2}+|A^{\alpha+1/2}u(t)|^{2}=+\infty
\qquad
\forall\alpha>0,
\label{th:bu-energy}
\end{equation}
and 
\begin{equation}
\lim_{t\to T_{\infty}^{-}}u'(t)
\quad
\text{does not exist}.
\label{th:no-limit}
\end{equation}

\end{thm}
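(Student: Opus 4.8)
The plan is to realize the informally described three-step strategy, turning the single assumed heteroclinic connection into an infinite cascade that drives the energy toward arbitrarily high frequencies in finite time.

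First I would set up the rescaling that underlies Proposition~\ref{prop:iteration}. Writing a solution supported on $\Span(e_k,e_{k+1})$ as $u(t)=V(t)e_k+W(t)e_{k+1}$, one has $|A^{1/2}u(t)|^2=\lambda^{2k}\bigl(V(t)^2+\lambda^2W(t)^2\bigr)$, and a direct computation shows that $(v,w)$ solves the level-zero system (\ref{defn:HC-system}) if and only if
\begin{equation}
V(t)=\lambda^{-k}v(\lambda^{k}t), \qquad W(t)=\lambda^{-k}w(\lambda^{k}t)
\nonumber
\end{equation}
solves the unforced Kirchhoff equation on $\Span(e_k,e_{k+1})$, whose two eigenvalues are exactly $\lambda^{2k}$ and $\lambda^{2k+2}$. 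Thus the assumed connection between the frequencies $1$ and $\lambda^2$ produces, for every $k$, a connection between $\lambda^{2k}$ and $\lambda^{2k+2}$, obtained by compressing time by the factor $\lambda^{k}$ and shrinking the amplitude by $\lambda^{-k}$. This rescaling leaves the energy-space norm invariant, since $|A^{1/2}u(t)|^2=v(\lambda^k t)^2+\lambda^2 w(\lambda^k t)^2$ and $|u'(t)|^2=v'(\lambda^k t)^2+w'(\lambda^k t)^2$, which already explains why the solution we build stays bounded in $D(A^{1/2})\times H$ while all higher norms escape.

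Next I would invoke Proposition~\ref{prop:transition} to replace, on a prescribed interval of level-zero length $L_k$, the $k$-th connection by a genuine \emph{bridge}: a trajectory in $\Span(e_k,e_{k+1})$ that equals the simple mode of frequency $\lambda^{2k}$ at its left endpoint and the simple mode of frequency $\lambda^{2k+2}$ at its right endpoint, at the cost of a forcing term supported on $\Span(e_k,e_{k+1})$. Because the assumed connection converges to its limiting simple modes exponentially fast by (\ref{hp:hc-v-decay})--(\ref{hp:hc-w-decay}), the level-zero forcing needed to splice it to the exact modes has sup-norm $\lesssim e^{-cL_k}$ for a constant $c>0$ depending only on $A_0$; after the rescaling above, the $k$-th bridge occupies a time interval of length $L_k/\lambda^{k}$ and its forcing has sup-norm $\lesssim \lambda^{k}e^{-cL_k}$. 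I would then define $u$ on $[0,T_\infty)$ by glueing these bridges consecutively, using the freedom in the phase of the limiting simple modes (or a negligibly short waiting time on each mode) to match the bridges in position and velocity at the junction times $T_{k+1}$; since the bridge forcing can be taken to vanish at the endpoints, the resulting $u$ is of class $C^2$ and the glued $f$ is continuous. As each bridge is supported on two eigenvectors, $u(t)$ has finite Fourier support for every $t<T_\infty$ and hence automatically lies in $\mathcal{G}_{s,r}(A)$ for all $s>0$, $r>0$, giving (\ref{th:reg-u}).

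The heart of the argument, and the step I expect to be the main obstacle, is the choice of the bridge lengths $L_k$, because two requirements pull in opposite directions. On one hand, the blow-up time $T_\infty=\sum_k L_k/\lambda^{k}$ (plus summable waiting times) must be finite, which forbids $L_k$ from growing faster than $\lambda^{k}$ divided by a summable factor. On the other hand, during the $k$-th bridge the Gevrey norm of the forcing is of order $\lambda^{2k}e^{-2cL_k}\exp\!\bigl(r\lambda^{(k+1)/s}\bigr)$, so to obtain (\ref{th:reg-f}), namely $f\in C^0([0,+\infty),\mathcal{G}_{s,r}(A))$ for every $s>1$ and $r>0$ with $f\equiv 0$ on $[T_\infty,+\infty)$, I need this quantity to tend to $0$ as $k\to\infty$ for all such $s,r$ simultaneously, which forces $L_k\gtrsim \lambda^{k/s}$ for every $s>1$. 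Both constraints can be met, for instance with $L_k=\lambda^{k}/k^{2}$, precisely because $\lambda^{k/s}=o(\lambda^{k})$ for every $s>1$ while $\sum_k L_k/\lambda^k=\sum_k k^{-2}<+\infty$; the same computation shows that the construction degenerates exactly at $s=1$, where finiteness of $T_\infty$ and analytic (or quasi-analytic) regularity of $f$ become incompatible, which is the expected manifestation of the optimality of the classical quasi-analytic global existence result. Finally I would read off (\ref{th:bu-energy}) and (\ref{th:no-limit}) from the scaling: at a phase where the $k$-th limiting mode, nontrivial by (\ref{hp:HCA-nontrivial}), attains its amplitude $\sim\lambda^{-k}$, one has $|A^{\alpha+1/2}u|^2\sim (\lambda^{2k})^{2\alpha+1}\lambda^{-2k}=\lambda^{4k\alpha}\to+\infty$ for every $\alpha>0$, whereas $u'$ retains an $H$-norm bounded away from zero concentrated on the single eigenvector $e_k$, so that as $k\to\infty$ these values occupy mutually orthogonal directions and cannot converge.
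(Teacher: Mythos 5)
Your proposal is correct and follows essentially the same route as the paper: the same rescaling of the heteroclinic connection to the frequency pair $(\lambda^{2k},\lambda^{2k+2})$, the same cutoff-spliced bridges with exponentially small forcing, the same choice of bridge lengths (your $L_{k}=\lambda^{k}/k^{2}$ is exactly the paper's $S_{k}=1/(k+1)^{2}$ after rescaling), and the same gluing at the standard phase points where the simple mode crosses zero. The only cosmetic difference is that for (\ref{th:bu-energy}) the paper evaluates the velocity term $|A^{\alpha}u'(T_{k})|^{2}=H_{0}^{2}\lambda^{4k\alpha}$ at the junction times instead of the position term at a maximal-amplitude phase, but both give the same $\lambda^{4k\alpha}$ growth.
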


We conclude with some comments on Theorem~\ref{thm:main} above.

\begin{rmk}[Time regularity]
\begin{em}

The regularity of $u$ and $f$ with respect to time depends only on the regularity of the nonlinearity $m$. If $m$ is of class $C^{1}$, then any pair of solutions $v$ and $w$ to (\ref{defn:HC-system}) is automatically of class $C^{3}$. At this point, a careful inspection of our construction reveals that actually we can improve (\ref{th:reg-f}) and (\ref{th:reg-u}), respectively, to $C^{1}$ and $C^{3}$ regularity. We spare the reader from the details, which would require only longer but standard estimates, without introducing new ideas.

In the same way, if the nonlinearity $m$ is of class $C^{r}$, then we obtain $C^{r}$ regularity in (\ref{th:reg-f}), and $C^{r+2}$ regularity in (\ref{th:reg-u}).

\end{em}
\end{rmk}

\begin{rmk}[Space regularity]
\begin{em}

We observe that in (\ref{th:reg-u}) we allow also values $s<1$, which means that the solution $u$ is more than analytic. More precisely, for every $T\in(0,T_{\infty})$ there exists a finitely dimensional subspace $H_{T}$ of $H$ such that $u(t)\in H_{T}$ for every $t\in[0,T]$.

Concerning the forcing term, the function $f$ that we construct in the proof satisfies
\begin{equation}
\sum_{k=0}^{\infty}\langle f(t),e_{k}\rangle^{2}\exp\left(\frac{c\lambda^{k}}{(k+1)^{2}}\right)<+\infty
\qquad
\forall t\geq 0
\nonumber
\end{equation}
for a suitable $c>0$. This implies the Gevrey regularity (\ref{th:reg-f}). More generally, for every increasing function $\varphi:[1,+\infty)\to[0,+\infty)$ such that
\begin{equation}
\int_{1}^{+\infty}\frac{\varphi(\sigma)}{\sigma^{2}}\,d\sigma<+\infty
\qquad\text{and}\qquad
\frac{\varphi(\sigma)}{\sigma^{2}}
\text{ is nonincreasing},
\label{defn:qa}
\end{equation}
we can modify our construction (it is enough to modify the choice of $S_{k}$ in the last step) in such a way that
\begin{equation}
\sum_{k=0}^{\infty}\langle f(t),e_{k}\rangle^{2}\exp\left(\varphi(\lambda^{2k})\right)<+\infty
\qquad
\forall t\geq 0.
\label{th:f-qa}
\end{equation}

We recall that, if $f(t)$ satisfies (\ref{th:f-qa}) for some increasing function $\varphi$ for which the integral in (\ref{defn:qa}) is divergent, then $f(t)$ is quasi-analytic.

\end{em}
\end{rmk}

\begin{rmk}[Size of the external force]
\begin{em}

A careful inspection of the proof reveals that we can choose the norm of the external force (in any fixed non quasi-analytic class) to be smaller than any given positive constant. To this end, again it is enough to modify the definition of $S_{k}$ in the last step. The other side of the coin is that the blow-up time $T_{\infty}$ depends on the norm of the external force, and tends to infinity when the norm of the external force vanishes.

\end{em}
\end{rmk}

\begin{rmk}[More general heteroclinic connections]
\begin{em}

For the sake of simplicity the heteroclinic connection assumption, as stated in Definition~\ref{defn:HCA}, involves a system that corresponds to an unforced Kirchhoff equation with two components. We can generalize the assumption by considering systems that originate from unforced Kirchhoff equations with a finite number of components. The request becomes that all components, with the exception of one, decay exponentially as $t\to -\infty$, and all components, with the exception of another one, corresponding to a different eigenvalue, decay exponentially as $t\to +\infty$.

Under this more general assumption we can again reproduce the phenomenon of Theorem~\ref{thm:main}, more or less with the same proof. We just need to be more careful in the choice of the eigenvalues of the operator, in order to reproduce the bridge at different scales.

\end{em}
\end{rmk}

\begin{rmk}[More general operators]
\begin{em}

For the sake of simplicity we decided to construct the counterexample for an operator that admits the sequence $\{\lambda^{2k}\}$ among its eigenvalues. With some extra (but rather standard) work, it is possible to extend the construction to more general multiplication operators, provided that the sequence $\{\lambda^{2k}\}$ is contained in the support of the spectrum. We point out that this is always true, for example, in the concrete case where $A$ is the Laplacian in the whole space $\re^{d}$, in which case the support of the spectrum is the half-line $[0,+\infty)$.

\end{em}
\end{rmk}

\begin{rmk}[Relation with known global existence results]
\begin{em}

We observe that our construction dodges carefully all the key assumptions of the known global existence results quoted in the introduction. Indeed, we already observed that in our construction we can not take the forcing term $f(t)$ to be analytic or quasi-analytic. Moreover, a heteroclinic connection does not exist when $m$ is the nonlinearity considered in~\cite{1985-Pohozaev} (this can be proved by using Pohozaev's invariant), or when the energy is small enough, because simple modes with small energy are known to be stable (see~\cite{1980-Dickey,gg:stability}), and therefore the existence of a heteroclinic connection does not prevent global existence for small initial data. Finally, the operator and the forcing term that we consider do not fall in the spectral gap regime, because the sequence $\lambda^{2k}$ does not grow fast enough as required by those results.

\end{em}
\end{rmk}


\setcounter{equation}{0}
\section{Proof of the main result}\label{sec:proofs}

\subsubsection*{Boundedness and Lipschitz continuity of the nonlinearity}

To begin with, we observe that the solution $(v(t),w(t))$ to system (\ref{defn:HC-system}) satisfies the classical energy equality 
\begin{equation}
v'(t)^{2}+w'(t)^{2}+M\left(v(t)^{2}+\lambda^{2}w(t)^{2}\right)=H_{0}^2
\qquad
\forall t\in\re,
\label{hamiltonian}
\end{equation}
for some real number $H_{0}>0$, where
\begin{equation}
M(\sigma):=\int_{0}^{\sigma}m(s)\,ds
\qquad
\forall\sigma\geq 0,
\label{defn:M}
\end{equation}
and the positivity of $H_{0}$ follows from (\ref{hp:HCA-nontrivial}). Due to the strict hyperbolicity assumption (\ref{hp:m-sh}), the function (\ref{defn:M}) satisfies $M(\sigma)\geq\mu_{1}\sigma$ for every $\sigma\geq 0$. Thus from (\ref{hamiltonian}) it follows that
\begin{equation}
v(t)^{2}+\lambda^{2}w(t)^{2}\leq\frac{H_{0}^{2}}{\mu_{1}}
\qquad\quad\text{and}\quad\qquad
v'(t)^{2}+w'(t)^{2}\leq H_{0}^{2}
\nonumber
\end{equation}
for every $t\in\re$, and in particular
\begin{equation}
\sup_{t\in\re}\left\{\strut
|v'(t)|,|v(t)|,|w'(t)|,\lambda|w(t)|\right\}\leq
\max\left\{1,\frac{1}{\sqrt{\mu_{1}}}\right\}H_{0}=: 
H_{1}.
\label{defn:H1}
\end{equation}

As a consequence, in the sequel we can assume that
\begin{equation}
0<\mu_{1}\leq m(\sigma)\leq\mu_{2}
\qquad
\forall\sigma\geq 0,
\nonumber
\end{equation}
and that $m$ is Lipschitz continuous with some Lipschitz constant $L$.

\subsubsection*{Simple modes}

Let $z_{1}:\re\to\re$ denote the standard simple mode with energy $H_{0}$, namely the solution to the ordinary differential equation
\begin{equation}
z_{1}''(t)+m\left(z_{1}(t)^2\strut\right)z_{1}(t)=0
\nonumber
\end{equation}
with initial data
\begin{equation}
z_{1}(0)=0,
\qquad
z_{1}'(0)=H_{0}.
\nonumber
\end{equation}

It is well-known that $z_{1}(t)$ is a periodic function of class $C^{3}$, and we call $\pi_{1}$ its minimal period. One can check that, for every real number $\lambda>0$, the function defined by
\begin{equation}
z_{\lambda}(t):=\frac{1}{\lambda}z_{1}(\lambda t)
\qquad
\forall t\in\re,
\nonumber
\end{equation}
whose minimal period is of course $\pi_{\lambda}:=\pi_{1}/\lambda$,
is a solution to equation
\begin{equation}
z_{\lambda}''(t)+\lambda^{2}m\left(\lambda^{2}z_{\lambda}(t)^2\strut\right)z_{\lambda}(t)=0
\nonumber
\end{equation}
with the same initial data
\begin{equation}
z_{\lambda}(0)=0,
\qquad
z_{\lambda}'(0)=H_{0}.
\nonumber
\end{equation}

We call $z_{\lambda}$ the simple mode of energy $H_{0}$ corresponding to the frequency $\lambda^{2}$. We observe that all simple modes satisfy the energy equality
\begin{equation}
z_{\lambda}'(t)^{2}+M\left(\lambda^{2}z_{\lambda}(t)^2\strut\right)=H_{0}^{2}
\qquad
\forall t\in\re,
\nonumber
\end{equation}
and therefore
\begin{equation}
\sup_{t\in\re}\left\{\strut 
|z_{1}'(t)|,|z_{1}(t)|,|z_{\lambda}'(t)|,\lambda|z_{\lambda}(t)|
\right\}\leq H_{1},
\label{defn:H1-bis}
\end{equation}
where $H_{1}$ is the same constant as in (\ref{defn:H1}).

The key point is that the heteroclinic connection $(v(t),w(t))$ is exponentially asymptotic to the simple mode $z_{1}$ as $t\to -\infty$, and to the simple mode $z_{\lambda}$ as $t\to +\infty$. The formal statement is the following (we omit the long but rather standard proof).

\begin{lemma}[Limiting periodic orbits]\label{lemma:limit-orbits}

Let $m:[0,+\infty)\to(0,+\infty)$ be a function of class $C^1$ satisfying the strict hyperbolicity assumption (\ref{hp:m-sh}), and let $\lambda>1$ be a real number. Let us assume that the pair $(m,\lambda)$ satisfies the heteroclinic connection assumption. 

Then there exist two real numbers $\tau_{0}\in[0,\pi_{1}]$ and $\tau_{1}\in[0,\pi_{\lambda}]$, and two positive real constants $A_{1}$ and $B_{1}$, such that
\begin{equation}
|v'(t)-z_{1}'(t-\tau_{0})|^{2}+|v(t)-z_{1}(t-\tau_{0})|^{2}\leq B_{1}\exp(-A_{1}|t|)
\qquad
\forall t\leq 0,
\label{th:v-z-decay}
\end{equation}
and
\begin{equation}
|w'(t)-z_{\lambda}'(t-\tau_{1})|^{2}+
\lambda^{2}|w(t)-z_{\lambda}(t-\tau_{1})|^{2}\leq 
B_{1}\exp(-A_{1}t)
\qquad
\forall t\geq 0.
\label{th:w-z-decay}
\end{equation}

\end{lemma}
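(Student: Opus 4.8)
The plan is to prove the two estimates by the same argument, so I would focus on (\ref{th:v-z-decay}) and obtain (\ref{th:w-z-decay}) by exchanging the roles of the two components, replacing $z_1$ by $z_\lambda$, and reversing the verse of time.

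\emph{Step 1 (reduction to a perturbed simple mode).} As $t\to-\infty$ the second component is exponentially small by (\ref{hp:hc-w-decay}), so that $\lambda^{2}w(t)^{2}\le B_{0}\exp(-A_{0}|t|)$ for every $t\le0$. I would rewrite the first equation of (\ref{defn:HC-system}) as
\begin{equation}
v''(t)+m\left(v(t)^{2}\strut\right)v(t)=g(t),
\qquad
g(t):=-\left[m\left(v(t)^{2}+\lambda^{2}w(t)^{2}\strut\right)-m\left(v(t)^{2}\strut\right)\right]v(t),
\nonumber
\end{equation}
and use the Lipschitz continuity of $m$ together with the bound (\ref{defn:H1}) to obtain $|g(t)|\le L H_{1}B_{0}\exp(-A_{0}|t|)$ for every $t\le0$. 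Thus $(v,v')$ solves the autonomous simple-mode system associated with $z_{1}$, perturbed by a forcing term that decays exponentially as $t\to-\infty$.

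\emph{Step 2 (convergence of the energy).} Setting $E_{v}(t):=v'(t)^{2}+M(v(t)^{2})$, a direct computation gives $E_{v}'(t)=2v'(t)g(t)$, hence $|E_{v}'(t)|\le C\exp(-A_{0}|t|)$ for $t\le0$; therefore $E_{v}(t)$ admits a limit as $t\to-\infty$, which is reached at an exponential rate. Passing to the limit in the energy identity (\ref{hamiltonian}) and using that $w,w'\to0$, I identify this limit as $H_{0}^{2}$, that is, exactly the energy of the orbit described by $z_{1}$. In particular $(v(t),v'(t))$ lies in a thin annular neighborhood $U$ of the closed level curve $\Gamma:=\{(x,y):y^{2}+M(x^{2})=H_{0}^{2}\}$ for every $t$ below a suitable threshold $t_{*}$.

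\emph{Step 3 (asymptotic phase).} Since $m(0)\ge\mu_{1}>0$, the origin is a nondegenerate center for the simple-mode system, so $\Gamma$ is a smooth simple closed curve and the whole annulus $U$ can be described through action--angle (equivalently energy--phase) coordinates $(E,\theta)$, in which the unperturbed flow becomes the rigid rotation $\dot E=0$, $\dot\theta=\omega(E)$, with $\omega$ a positive function of class $C^{1}$; set $\omega_{0}:=\omega(H_{0}^{2})>0$. In these coordinates the perturbed dynamics reads $\dot E=O(\exp(-A_{0}|t|))$ and $\dot\theta=\omega(E)+O(\exp(-A_{0}|t|))$. Because $E(t)\to H_{0}^{2}$ exponentially, the bound $|\omega(E(t))-\omega_{0}|\le C|E(t)-H_{0}^{2}|$ shows that $\frac{d}{dt}\bigl(\theta(t)-\omega_{0}t\bigr)$ is exponentially small and integrable on $(-\infty,t_{*}]$, so $\theta(t)-\omega_{0}t$ converges to some $\theta_{\infty}$ at rate $\exp(-A_{0}|t|)$. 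Transferring back through the bi-Lipschitz chart and choosing the phase shift $\tau_{0}\in[0,\pi_{1}]$ so that $z_{1}(t-\tau_{0})$ corresponds to the energy $H_{0}^{2}$ and the asymptotic phase $\theta_{\infty}$, I obtain (\ref{th:v-z-decay}) for $t\le t_{*}$; the bounded interval $[t_{*},0]$ is absorbed into the constant $B_{1}$.

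\emph{Main obstacle.} The delicate point is Step~3: one must set up the energy--phase coordinates rigorously in a neighborhood of $\Gamma$ and control the frequency $\omega(E)$ and its first derivative near $H_{0}^{2}$. This is precisely where the decoupling that circumvents the (otherwise linear, shear-type) growth of the linearized flow takes place, and it explains why the exponential rate essentially survives from (\ref{hp:hc-v-decay})--(\ref{hp:hc-w-decay}) to (\ref{th:v-z-decay})--(\ref{th:w-z-decay}). The remaining computations are routine, which is consistent with the authors' decision to omit them.
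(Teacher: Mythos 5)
The paper never proves this lemma: the authors introduce it with the remark that they ``omit the long but rather standard proof'', so there is no reference argument to compare yours against; your sketch has to be judged on its own, and on its own it is the natural argument and its structure is sound. Steps 1 and 2 are correct and essentially complete: the first equation of (\ref{defn:HC-system}) becomes the simple-mode equation with a forcing $g(t)$ bounded by $LH_{1}B_{0}\exp(-A_{0}|t|)$ thanks to (\ref{hp:hc-w-decay}) and (\ref{defn:H1}); note that Step 2 can even be shortened, since the Hamiltonian identity (\ref{hamiltonian}) gives directly $|E_{v}(t)-H_{0}^{2}|\leq w'(t)^{2}+\mu_{2}\lambda^{2}w(t)^{2}\leq C\exp(-A_{0}|t|)$, without integrating $E_{v}'=2v'g$. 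Step 3 is the crux, and you have identified the right mechanism: the neutral (phase) direction must be decoupled from the energy direction, otherwise a naive Gronwall comparison over an infinite time interval is destroyed by the shear of the period map. Two small corrections/additions would make it airtight. First, what the chart construction really needs is not the nondegeneracy of the center at the origin but that $\Gamma$ is a \emph{regular} compact level curve: since $\nabla\left(y^{2}+M(x^{2})\right)=(2m(x^{2})x,2y)$ vanishes only at the origin and $H_{0}>0$, the curve $\Gamma$ carries no equilibria, and the $C^{1}$ dependence of the return time on the energy (implicit function theorem applied to the $C^{1}$ flow) gives the $C^{1}$ frequency $\omega(E)$ you invoke. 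Second, you should state explicitly that the exponential convergence of the energy forces $(v(t),v'(t))$ to remain in the annular chart for all $t\leq t_{*}$, and that the phase of $z_{1}(\cdot-\tau_{0})$ equals $\theta_{\infty}+\omega_{0}t$ modulo $2\pi$ for exactly one $\tau_{0}\in[0,\pi_{1})$, which is how $\tau_{0}$ is pinned down. With these points spelled out, the phase drift $\frac{d}{dt}\left(\theta(t)-\omega_{0}t\right)=\left[\omega(E(t))-\omega_{0}\right]+O\left(\exp(-A_{0}|t|)\right)$ is integrable on $(-\infty,t_{*}]$ precisely because the energy converges exponentially, the bi-Lipschitz transfer yields (\ref{th:v-z-decay}) up to absorbing $[t_{*},0]$ into $B_{1}$, and the symmetric argument (swap the components, replace $z_{1}$ by $z_{\lambda}$, reverse time) yields (\ref{th:w-z-decay}). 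In short: no gap in the strategy, only the standard energy--phase construction left implicit, which matches the authors' own assessment of the omitted proof as long but standard.
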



\subsubsection*{The basic bridge between simple modes}

Thanks to Lemma~\ref{lemma:limit-orbits}, we can think of the heteroclinic connection $(v(t),w(t))$ as a trajectory that connects the two simple modes $z_{1}$ and $z_{\lambda}$ in an infinite time, without the aid of any external force. Now we show that, if we allow an external force, then we can find a trajectory $(v_{S}(t),w_{S}(t))$ that connects the same two simple modes in a finite time interval $[-2S,2S]$. The size of the required external force decays exponentially when $S$ grows.

\begin{prop}[Transition between two simple modes in a finite time interval]\label{prop:transition}

Let $m:[0,+\infty)\to(0,+\infty)$ be a function of class $C^1$ satisfying the strict hyperbolicity assumption (\ref{hp:m-sh}), and let $\lambda>1$ be a real number. Let us assume that the pair $(m,\lambda)$ satisfies the heteroclinic connection assumption. 

Then for every $S>0$ there exist two functions $v_{S}:\re\to\re$ and $w_{S}:\re\to\re$ of class $C^{2}$, and two continuous functions $\varphi_{S}:\re\to\re$ and $\psi_{S}:\re\to\re$, with the following properties.

\begin{enumerate}
\renewcommand{\labelenumi}{(\arabic{enumi})}

\item \emph{(Kirchhoff equation with two modes and forcing term).} The functions $v_{S}$ and $w_{S}$ are solutions to the system
\begin{equation}
\begin{cases}
v_{S}''(t)+m\left(v_{S}(t)^2+\lambda^{2}w_{S}(t)^{2}\strut\right)v_{S}(t)=\varphi_{S}(t) 
\\[0.5ex]
w_{S}''(t)+\lambda^{2}m\left(v_{S}(t)^2+\lambda^{2}w_{S}(t)^{2}\strut\right)w_{S}(t)=\psi_{S}(t) 
\end{cases}
\qquad
\forall t\in\re.
\label{th:eqn-vSwS}
\end{equation}

\item \emph{(Conditions at infinity).} The functions $v_{S}$ and $w_{S}$ satisfy
\begin{equation}
v_{S}(t)=z_{1}(t-\tau_{0})
\quad\text{and}\quad
w_{S}(t)=0
\qquad
\forall t\leq -2S,
\label{th:v-w-neg}
\end{equation}
and
\begin{equation}
v_{S}(t)=0
\quad\text{and}\quad
w_{S}(t)=z_{\lambda}(t-\tau_{1})
\qquad
\forall t\geq 2S.
\label{th:v-w-pos}
\end{equation}

In particular, there exist $S_{1}\in[2S,2S+\pi_{1}]$ and $S_{2}\in[2S,2S+\pi_{\lambda}]$ such that
\begin{equation}
w_{S}(-S_{1})=w_{S}'(-S_{1})=v_{S}(-S_{1})=0,
\qquad
v_{S}'(-S_{1})=H_{0},
\nonumber
\end{equation}
and
\begin{equation}
v_{S}(S_{2})=v_{S}'(S_{2})=w_{S}(S_{2})=0,
\qquad
w_{S}'(S_{2})=H_{0}.
\nonumber
\end{equation}

\item  \emph{(Bound on the forcing term).} The functions $\varphi_{S}$ and $\psi_{S}$ are such that 
\begin{equation}
\varphi_{S}(t)=\psi_{S}(t)=0
\qquad
\forall t\in(-\infty,-2S]\cup[-S,S]\cup[2S,+\infty),
\label{th:phi-psi-0}
\end{equation}
and satisfy the estimate
\begin{equation}
|\varphi_{S}(t)|^{2}+|\psi_{S}(t)|^{2}\leq \left(\frac{1}{S^{2}}+1\right)^{2}B_{2}\exp(-A_{2}S)
\qquad
\forall t\in\re,
\label{th:est-phi-psi}
\end{equation}
where $A_{2}$ and $B_{2}$ are two positive real numbers, both independent of $S$ and $t$.

\end{enumerate}

\end{prop}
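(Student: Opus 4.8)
The plan is to build the transition $(v_S,w_S)$ by \emph{interpolating} between the heteroclinic connection $(v,w)$ and the limiting simple modes, using a smooth cut-off, and then to define the forcing terms $\varphi_S,\psi_S$ as whatever residual the interpolated functions fail to satisfy in the unforced system. First I would fix a smooth cut-off function $\chi:\re\to[0,1]$ with $\chi(t)=1$ for $t\le -1$, $\chi(t)=0$ for $t\ge 1$, and rescale it to the interval $[-S,S]$, i.e. set $\chi_S(t):=\chi(t/S)$, so that $|\chi_S'|\lesssim 1/S$ and $|\chi_S''|\lesssim 1/S^2$ (this is where the factor $(1/S^2+1)^2$ in \eqref{th:est-phi-psi} originates). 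The idea is that on the left region $v_S$ should be the simple mode $z_1(t-\tau_0)$ and $w_S\equiv 0$, on the right region $v_S\equiv 0$ and $w_S$ should be $z_\lambda(t-\tau_1)$, and in between we glue via the heteroclinic connection. Concretely I would use $\chi_S$ to switch $v_S$ from $z_1(t-\tau_0)$ (for $t\le -2S$) to the true heteroclinic $v$ on the central band, and a second cut-off near $t=+2S$ to switch $w_S$ from the heteroclinic $w$ to $z_\lambda(t-\tau_1)$; the key leverage is Lemma~\ref{lemma:limit-orbits}, which guarantees that the functions being interpolated are exponentially close on the overlap regions, so the interpolation error is of size $\exp(-A_1 S)$.

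Next I would \emph{define} $\varphi_S$ and $\psi_S$ by plugging the constructed $(v_S,w_S)$ into the left-hand sides of \eqref{th:eqn-vSwS}; then property (1) holds by construction, and the real content is showing the support and size claims (3). The support statement \eqref{th:phi-psi-0} is the reason for placing the cut-offs carefully: on $(-\infty,-2S]$ the pair equals the exact simple-mode solution $(z_1(\cdot-\tau_0),0)$, which solves the unforced system, so the residual vanishes; symmetrically on $[2S,+\infty)$ the pair equals $(0,z_\lambda(\cdot-\tau_1))$. On the \emph{central} interval $[-S,S]$ the cut-offs are constant ($\chi_S\equiv 1$ there, and the right cut-off $\equiv 0$), so on that band $(v_S,w_S)$ coincides exactly with the genuine heteroclinic $(v,w)$, which solves \eqref{defn:HC-system} with no forcing — hence the residual again vanishes. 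The forcing is therefore supported only on the two transition bands $[-2S,-S]$ and $[S,2S]$, exactly as stated.

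For the size estimate \eqref{th:est-phi-psi} I would compute the residual on, say, the left transition band $[-2S,-S]$, where $v_S$ is a $\chi_S$-interpolation between $z_1(t-\tau_0)$ and $v(t)$ (and $w_S$ between $0$ and $w(t)$). Writing $v_S=\chi_S z_1(\cdot-\tau_0)+(1-\chi_S)v$ and similarly for $w_S$, the residual in the first equation splits into three kinds of terms: (i) derivative-of-cut-off terms $\chi_S''\,(z_1-v)$ and $\chi_S'\,(z_1'-v')$, which are bounded using the exponential closeness $|z_1(\cdot-\tau_0)-v|^2+|z_1'(\cdot-\tau_0)-v'|^2\le B_1\exp(-A_1|t|)\le B_1\exp(-A_1 S)$ from \eqref{th:v-z-decay} together with $|\chi_S'|\le C/S$, $|\chi_S''|\le C/S^2$, giving the factor $(1/S^2+1)^2$; and (ii) terms measuring the mismatch between the nonlinear coefficients, of the form $\big(m(v_S^2+\lambda^2 w_S^2)-m(z_1^2)\big)z_1$ and similar, which are controlled by the Lipschitz constant $L$ of $m$ (established in the preliminary remarks) applied to the argument difference, again an $O(\exp(-A_1 S))$ quantity because $w_S$ and $w$ are exponentially small on the left. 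Summing all contributions and doing the symmetric analysis on $[S,2S]$ (now using \eqref{th:w-z-decay} and the smallness of the $v$-component) yields \eqref{th:est-phi-psi} with suitable $A_2,B_2$ independent of $S$ and $t$.

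The main obstacle I anticipate is twofold. First, the two transition bands must be placed so that the central band exactly reproduces the \emph{genuine} heteroclinic — one must be careful that switching $v$ toward its simple mode and switching $w$ toward its simple mode happen on \emph{disjoint} time bands (left band for $v$, right band for $w$), otherwise the central cancellation that kills the forcing on $[-S,S]$ fails; this is why the construction naturally lives on $[-2S,2S]$ with a forcing-free core. Second, the final endpoint normalization in (2) — the existence of $S_1\in[2S,2S+\pi_1]$ and $S_2\in[2S,2S+\pi_\lambda]$ at which the pair is exactly at the ``phase-zero'' point of the respective simple mode with $v_S'=H_0$ (resp.\ $w_S'=H_0$) — is a matter of following the pure periodic simple mode beyond $t=2S$ and invoking periodicity: within one minimal period $\pi_1$ (resp.\ $\pi_\lambda$) the mode $z_1(t-\tau_0)$ (resp.\ $z_\lambda(t-\tau_1)$) passes through its initial state $(0,H_0)$, so such an $S_1$ (resp.\ $S_2$) exists in the claimed window. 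Everything else is the routine but lengthy verification of the $C^2$ regularity of the glued functions, which follows from the $C^2$ (indeed $C^3$) regularity of $v,w,z_1,z_\lambda$ and the smoothness of $\chi_S$.
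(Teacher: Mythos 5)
Your proposal takes essentially the same route as the paper's proof: a rescaled smooth cutoff used to form convex combinations of the heteroclinic connection with the limiting simple modes on the two bands $[-2S,-S]$ and $[S,2S]$, the forcing defined as the residual of the system (hence vanishing on $(-\infty,-2S]$, on the heteroclinic core $[-S,S]$, and on $[2S,+\infty)$), and the estimate (\ref{th:est-phi-psi}) obtained by combining the cutoff-derivative bounds $O(1/S)$, $O(1/S^{2})$ with the exponential closeness from Lemma~\ref{lemma:limit-orbits}, the decay in Definition~\ref{defn:HCA}, and the Lipschitz/boundedness properties of $m$ on the relevant compact set, plus periodicity of $z_{1}$, $z_{\lambda}$ for claim (2). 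The only slip is notational: with $\chi_{S}(t)=\chi(t/S)$ and $\chi$ switching on $[-1,1]$, the transition occurs on $[-S,S]$ rather than on the intended bands, so one should instead take a cutoff switching on $[1,2]$ composed with $\mp t/S$ (as the paper does); this changes nothing in the estimates.
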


\begin{proof}

Let $\theta\in C^\infty(\re)$ be a cutoff function such that
\begin{itemize}

\item  $\theta(x)=1$ for every $x\leq 1$,

\item  $\theta(x)=0$ for every $x\geq 2$,

\item  $0\leq\theta(x)\leq 1$ for every $x\in[1,2]$,

\end{itemize}
and let $\Gamma$ be a constant such that
\begin{equation}
|\theta'(x)|+|\theta''(x)|\leq \Gamma
\qquad
\forall x\in\re.
\label{defn:bound-theta}
\end{equation}

The idea is to use the function $\theta$ in order to define $v_{S}(t)$ and $w_{S}(t)$ as a convex combination of trajectories that coincides
\begin{itemize}

\item  with the heteroclinic connection $(v(t),w(t))$ for $t\in[-S,S]$,

\item  with the limiting periodic trajectory $(z_{1}(t-\tau_{0}),0)$ for $t\leq -2S$,

\item  with the limiting periodic trajectory $(0,z_{\lambda}(t-\tau_{1}))$ for $t\geq 2S$.

\end{itemize}

\paragraph{\textit{\textmd{Definition when $t\leq 0$}}}

In the case $t\leq 0$ we set $\theta_{S}(t):=\theta(-t/S)$ and we consider the functions
\begin{equation}
v_{S}(t):=\theta_{S}(t)v(t)+(1-\theta_{S}(t))z_{1}(t-\tau_{0})
\qquad\text{and}\qquad
w_{S}(t):=\theta_{S}(t)w(t).
\nonumber
\end{equation}

Since 
\begin{equation}
\theta_{S}(t)=0
\quad
\forall t\leq -2S
\qquad\quad\text{and}\quad\qquad
\theta_{S}(t)=1
\quad
\forall t\in[-S,0],
\label{eqn:theta-S}
\end{equation}
we deduce that (\ref{th:v-w-neg}) holds true, and in addition
\begin{equation}
(v_{S}(t),w_{S}(t))=(v(t),w(t))
\qquad
\forall t\in[-S,0].
\nonumber
\end{equation}

Computing the second order time derivatives of $v_{S}$ and $w_{S}$, we discover that for $t\leq 0$ these functions are solutions to system (\ref{th:eqn-vSwS}) provided that we set
\begin{eqnarray*}
\varphi_{S}(t)  & :=  &
\theta_{S}''(t)\left\{v(t)-z_{1}(t-\tau_{0})\right\}
+2\theta_{S}'(t)\left\{v'(t)-z_{1}'(t-\tau_{0})\right\}
\\[0.5ex]
& &
\mbox{}+\theta_{S}(t)v(t)\left\{m\left(v_{S}(t)^{2}+\lambda^{2}w_{S}(t)^{2}\right)-
m\left(v(t)^{2}+\lambda^{2}w(t)^{2}\right)\right\}
\\[0.5ex]
& &
\mbox{}+(1-\theta_{S}(t))z_{1}(t-\tau_{0})\left\{m\left(v_{S}(t)^{2}+\lambda^{2}w_{S}(t)^{2}\right)-
m\left(z_{1}(t-\tau_{0})^{2}\right)\right\},
\end{eqnarray*}
and
\begin{eqnarray*}
\psi_{S}(t)  & :=  &
\theta_{S}''(t)w(t)+2\theta_{S}'(t)w'(t)
\\[0.5ex]
& &
\mbox{}+\lambda^{2}\theta_{S}(t)w(t)\left\{m\left(v_{S}(t)^{2}+\lambda^{2}w_{S}(t)^{2}\right)-
m\left(v(t)^{2}+\lambda^{2}w(t)^{2}\right)\right\}.
\end{eqnarray*}

Using again (\ref{eqn:theta-S}) we obtain that
\begin{equation}
\varphi_{S}(t)=\psi_{S}(t)=0
\qquad
\forall t\in(-\infty,-2S]\cup[-S,0],
\nonumber
\end{equation}
which proves (\ref{th:phi-psi-0}) for $t\leq 0$.

It remains to prove (\ref{th:est-phi-psi}) for $t\leq 0$. Let $L_{1}(t)$, $L_{2}(t)$, $L_{3}(t)$ denote the three lines in the definition of $\varphi_{S}(t)$. From (\ref{defn:bound-theta}) we deduce that
\begin{equation}
|\theta_{S}'(t)|\leq\frac{\Gamma}{S}
\qquad\text{and}\qquad
|\theta_{S}''(t)|\leq\frac{\Gamma}{S^{2}}
\qquad
\forall t\leq 0,
\label{est:theta''}
\end{equation}
and therefore
\begin{equation}
|L_{1}(t)|\leq
\frac{\Gamma}{S^{2}}|v(t)-z_{1}(t-\tau_{0})|+\frac{2\Gamma}{S}|v'(t)-z_{1}'(t-\tau_{0})|.
\nonumber
\end{equation}

Moreover, from (\ref{defn:H1}) and (\ref{defn:H1-bis}) we deduce that
\begin{equation}
|v_{S}(t)|\leq H_{1}
\qquad\text{and}\qquad
\lambda|w_{S}(t)|\leq H_{1}
\qquad
\forall t\leq 0,
\nonumber
\end{equation}
and therefore from the Lipschitz continuity of $m$ we obtain that
\begin{eqnarray*}
\lefteqn{\hspace{-1em}
\left|m\left(v_{S}(t)^{2}+\lambda^{2}w_{S}(t)^{2}\right)-
m\left(v(t)^{2}+\lambda^{2}w(t)^{2}\right)\right|}
\\[0.5ex]
& \leq &
L\left(\left|v_{S}(t)^{2}-v(t)^{2}\right|+\lambda^{2}\left|w_{S}(t)^{2}-w(t)^{2}\right|\right)
\\[0.5ex]
& \leq &
L\left(\left|v_{S}(t)+v(t)\right|\cdot\left|v_{S}(t)-v(t)\right|+
\left|\lambda w_{S}(t)+\lambda w(t)\right|\cdot\lambda\left|w_{S}(t)-w(t)\right|\strut\right)
\\[0.5ex]
& \leq &
2H_{1}L\left(\left|v_{S}(t)-v(t)\right|+\lambda\left|w_{S}(t)-w(t)\right|\strut\right).
\end{eqnarray*}

Finally we observe that
\begin{equation}
|v_{S}(t)-v(t)|=(1-\theta_{S}(t))|v(t)-z_{1}(t-\tau_{0})|
\nonumber
\end{equation}
and
\begin{equation}
|w_{S}(t)-w(t)|=(1-\theta_{S}(t))|w(t)|,
\nonumber
\end{equation}
so that in conclusion
\begin{multline}
\qquad
\left|m\left(v_{S}(t)^{2}+\lambda^{2}w_{S}(t)^{2}\right)-
m\left(v(t)^{2}+\lambda^{2}w(t)^{2}\right)\right|
\\
\leq
2H_{1}L\left(|v(t)-z_{1}(t-\tau_{0})|+\lambda|w(t)|\right),
\qquad
\label{est:m-m}
\end{multline}
and therefore
\begin{equation}
|L_{2}(t)|\leq 2H_{1}^{2}L\left\{
\left|v(t)-z_{1}(t-\tau_{0})\right|+\lambda\left|w(t)\right|
\strut\right\}.
\nonumber
\end{equation}

In an analogous way we obtain that
\begin{eqnarray*}
\lefteqn{\hspace{-1em}
\left|m\left(v_{S}(t)^{2}+\lambda^{2}w_{S}(t)^{2}\right)-
m\left(z_{1}(t-\tau_{0})^{2}\right)\right|}
\\[0.5ex]
& \leq &
L\left(\left|v_{S}(t)^{2}-z_{1}(t-\tau_{0})^{2}\right|+\lambda^{2}w_{S}(t)^{2}\right)
\\[0.5ex]
& \leq &
L\left(\left|v_{S}(t)+z_{1}(t-\tau_{0})\right|\cdot\left|v_{S}(t)-z_{1}(t-\tau_{0})\right|+
\lambda|w_{S}(t)|\cdot\lambda|w_{S}(t)|\strut\right)
\\[0.5ex]
& \leq &
2H_{1}L\left(\left|v_{S}(t)-z_{1}(t-\tau_{0})\right|+\lambda|w_{S}(t)|\strut\right)
\\[0.5ex]
& \leq &
2H_{1}L\left(\left|v(t)-z_{1}(t-\tau_{0})\right|+\lambda|w(t)|\strut\right),
\end{eqnarray*}
and therefore
\begin{equation}
|L_{3}(t)|\leq
2H_{1}^{2}L\left\{\left|v(t)-z_{1}(t-\tau_{0})\right|+\lambda\left|w(t)\right|\strut\right\}.
\nonumber
\end{equation}

From all these estimate we deduce that
\begin{equation}
|\varphi_{S}(t)|\leq
\left(\frac{\Gamma}{S^{2}}+4H_{1}^{2}L\right)|v(t)-z_{1}(t-\tau_{0})|
+\frac{2\Gamma}{S}|v'(t)-z_{1}'(t-\tau_{0})|+
4H_{1}^{2}L\lambda\left|w(t)\right|.
\nonumber
\end{equation}

Similarly, from (\ref{est:theta''}) and (\ref{est:m-m}) we obtain that
\begin{eqnarray*}
|\psi_{S}(t)| & \leq &
\frac{\Gamma}{S^{2}}|w(t)|+\frac{2\Gamma}{S}|w'(t)|+2H_{1}^{2}L\lambda
\left\{|v(t)-z_{1}(t-t_{0})|+\lambda|w(t)|\right\}
\\
& \leq &
\left(\frac{\Gamma}{S^{2}\lambda}+2H_{1}^{2}L\lambda\right)\lambda|w(t)|
+\frac{2\Gamma}{S}|w'(t)|+2H_{1}^{2}L\lambda|v(t)-z_{1}(t-t_{0})|.
\end{eqnarray*}

Finally, taking (\ref{hp:hc-w-decay}) and (\ref{th:v-z-decay}) into account, we conclude that
\begin{equation}
|\varphi_{S}(t)|^{2}\leq
2\left(\frac{\Gamma}{S^{2}}+\frac{2\Gamma}{S}+4H_{1}^{2}L\right)^{2}B_{1}\exp\left(-A_{1}|t|\right)
+32H_{1}^{4}L^{2}B_{0}\exp\left(-A_{0}|t|\right),
\nonumber
\end{equation}
and analogously
\begin{equation}
|\psi_{S}(t)|^{2}\leq
2\left(\frac{\Gamma}{S^{2}\lambda}+\frac{2\Gamma}{S}+2H_{1}^{2}L\lambda\right)^{2}B_{0}\exp\left(-A_{0}|t|\right)
+8H_{1}^{4}L^{2}\lambda^{2}B_{1}\exp\left(-A_{1}|t|\right).
\nonumber
\end{equation}

Recalling that $\varphi_{S}(t)=\psi_{S}(t)=0$ for $t\in[-S,0]$, the last two inequalities imply (\ref{th:est-phi-psi}) for $t\leq 0$.

\paragraph{\textit{\textmd{Definition when $t\geq 0$}}}

In the case $t\geq 0$ we set $\theta_{S}(t):=\theta(t/S)$ and we consider the functions
\begin{equation}
v_{S}(t):=\theta_{S}(t)v(t)
\qquad\text{and}\qquad
w_{S}(t):=\theta_{S}(t)w(t)+(1-\theta_{S}(t))z_{\lambda}(t-\tau_{1}).
\nonumber
\end{equation}

As in the previous case we find that
\begin{equation}
(v_{S}(t),w_{S}(t))=(v(t),w(t))
\qquad
\forall t\in[0,S],
\nonumber
\end{equation}
and that for $t\geq 0$ these functions are solutions to system (\ref{th:eqn-vSwS}) provided that we set
\begin{eqnarray*}
\varphi_{S}(t)  & :=  &
\theta_{S}''(t)v(t)+2\theta_{S}'(t)v'(t)
\\[0.5ex]
& &
\mbox{}+\theta_{S}(t)v(t)\left\{m\left(v_{S}(t)^{2}+\lambda^{2}w_{S}(t)^{2}\right)-
m\left(v(t)^{2}+\lambda^{2}w(t)^{2}\right)\right\}.
\end{eqnarray*}
and
\begin{eqnarray*}
\psi_{S}(t)  & :=  &
\theta_{S}''(t)\left\{w(t)-z_{\lambda}(t-\tau_{1})\right\}
+2\theta_{S}'(t)\left\{w'(t)-z_{\lambda}'(t-\tau_{1})\right\}
\\[0.5ex]
& &
\mbox{}+\lambda^{2}\theta_{S}(t)w(t)\left\{m\left(v_{S}(t)^{2}+\lambda^{2}w_{S}(t)^{2}\right)-
m\left(v(t)^{2}+\lambda^{2}w(t)^{2}\right)\right\}
\\[0.5ex]
& &
\mbox{}+\lambda^{2}(1-\theta_{S}(t))z_{\lambda}(t-\tau_{1})\left\{m\left(v_{S}(t)^{2}+\lambda^{2}w_{S}(t)^{2}\right)-
m\left(\lambda^{2}z_{\lambda}(t-\tau_{1})^{2}\right)\right\}.
\end{eqnarray*}

As in the previous case we obtain that
\begin{multline*}
\qquad
\left|m\left(v_{S}(t)^{2}+\lambda^{2}w_{S}(t)^{2}\right)-
m\left(v(t)^{2}+\lambda^{2}w(t)^{2}\right)\right|
\\
\leq
2H_{1}L\left(|v(t)|+\lambda|w(t)-z_{\lambda}(t-\tau_{1})|\strut\right),
\qquad
\end{multline*}
and
\begin{multline*}
\qquad
\left|m\left(v_{S}(t)^{2}+\lambda^{2}w_{S}(t)^{2}\right)-
m\left(\lambda^{2}z_{\lambda}(t-\tau_{1})^{2}\right)\right|
\\
\leq
2H_{1}L\left(|v(t)|+\lambda|w(t)-z_{\lambda}(t-\tau_{1})|\strut\right),
\qquad
\end{multline*}
from which we deduce that
\begin{equation}
|\varphi_{S}(t)|\leq
\left(\frac{\Gamma}{S^{2}}+2H_{1}^{2}L\right)|v(t)|
+\frac{2\Gamma}{S}|v'(t)|
+2H_{1}^{2}L\lambda|w(t)-z_{\lambda}(t-\tau_{1})|
\nonumber
\end{equation}
and
\begin{equation}
|\psi_{S}(t)|\leq
\left(\frac{\Gamma}{S^{2}\lambda}+4H_{1}^{2}L\lambda\right)\lambda|w(t)-z_{\lambda}(t-\tau_{1})|
+\frac{2\Gamma}{S}|w'(t)-z_{\lambda}'(t-\tau_{1})|
+4H_{1}^{2}L\lambda|v(t)|.
\nonumber
\end{equation}

Finally, taking (\ref{hp:hc-v-decay}) and (\ref{th:w-z-decay}) into account, we conclude that
\begin{equation}
|\varphi_{S}(t)|^{2}\leq
2\left(\frac{\Gamma}{S^{2}}+\frac{2\Gamma}{S}+2H_{1}^{2}L\right)^{2}B_{0}\exp\left(-A_{0}t\right)
+8H_{1}^{4}L^{2}B_{1}\exp\left(-A_{1}t\right),
\nonumber
\end{equation}
and
\begin{equation}
|\psi_{S}(t)|^{2}\leq
2\left(\frac{\Gamma}{S^{2}\lambda}+\frac{2\Gamma}{S}+4H_{1}^{2}L\lambda\right)^{2}B_{1}\exp\left(-A_{1}t\right)
+32H_{1}^{4}L^{2}\lambda^{2}B_{0}\exp\left(-A_{0}t\right).
\nonumber
\end{equation}

Recalling that $\varphi_{S}(t)=\psi_{S}(t)=0$ for $t\in[0,S]$, the last two inequalities imply (\ref{th:est-phi-psi}) for $t\geq 0$.
\end{proof}


\subsubsection*{A sequence of bridges between consecutive simple modes}

In the next result we rescale the construction of Proposition~\ref{prop:transition}, and we obtain a bridge between the simple modes corresponding to the frequencies $\lambda^{2k}$ and $\lambda^{2k+2}$.

\begin{prop}[Rescaling]\label{prop:iteration}

Let $m:[0,+\infty)\to(0,+\infty)$ be a function of class $C^1$ satisfying the strict hyperbolicity assumption (\ref{hp:m-sh}), and let $\lambda>1$ be a real number. Let us assume that the pair $(m,\lambda)$ satisfies the heteroclinic connection assumption. 

Let $H$ be a Hilbert space, and let $A$ be an operator as in Theorem~\ref{thm:main}. Let $k$ be a positive integer, and let $S_{k}$ be a positive real number.

Then there exist two functions $u_{k}:\re\to H$ and $f_{k}:\re\to H$ with the following properties.
\begin{enumerate}
\renewcommand{\labelenumi}{(\arabic{enumi})}

\item \emph{(Regularity).} The functions $u_{k}$ and $f_{k}$ satisfy
\begin{equation}
u_{k}\in C^{2}\left(\re,\Span(e_{k},e_{k+1})\right)
\qquad\text{and}\qquad
f_{k}\in C^{0}\left(\re,\Span(e_{k},e_{k+1})\right).
\label{th:reg-uk-fk}
\end{equation} 

\item \emph{(Kirchhoff equation with two modes and forcing term).} The functions $u_{k}$ is a solutions to the forced Kirchhoff equation
\begin{equation}
u_{k}''(t)+m\left(|A^{1/2}u_{k}(t)|^2\strut\right)Au_{k}(t)=f_{k}(t) 
\qquad
\forall t\in\re.
\label{th:eqn-uk-fk}
\end{equation}

\item \emph{(Conditions at infinity).} The functions $u_{k}$ satisfies
\begin{equation}
u_{k}(t)=z_{\lambda^{k}}\left(t-\frac{\tau_{0}}{\lambda^{k}}\right)e_{k}
\qquad
\forall t\leq -2S_{k},
\label{th:uk-neg}
\end{equation}
and
\begin{equation}
u_{k}(t)=z_{\lambda^{k+1}}\left(t-\frac{\tau_{1}}{\lambda^{k}}\right)e_{k+1}
\qquad
\forall t\geq 2S_{k}.
\label{th:uk-pos}
\end{equation}

In particular, there exist $S_{1,k}\in[2S_{k},2S_{k}+\pi_{\lambda^{k}}]$ and $S_{2,k}\in[2S_{k},2S_{k}+\pi_{\lambda^{k+1}}]$ such that
\begin{equation}
u_{k}(-S_{1,k})=0,
\qquad
u_{k}'(-S_{1,k})=H_{0}e_{k},
\label{th:uk-S1k}
\end{equation}
and
\begin{equation}
u_{k}(S_{2,k})=0,
\qquad
u_{k}'(S_{2,k})=H_{0}e_{k+1}.
\label{th:uk-S2k}
\end{equation}

\item  \emph{(Properties of the forcing term).} The function $f_{k}$ is such that 
\begin{equation}
f_{k}(t)=0
\qquad
\forall t\in(-\infty,-2S_{k}]\cup[-S_{k},S_{k}]\cup[2S_{k},+\infty),
\label{th:fk=0}
\end{equation}
and for every positive value of $r$ and $s$ its norm in the Gevrey space $\mathcal{G}_{r,s}(A)$ satisfies
\begin{equation}
\|f_{k}(t)\|_{\mathcal{G}_{r,s}(A)}^{2}\leq
\left(\frac{1}{\lambda^{k}S_{k}^{2}}+\lambda^{k}\right)^{2}B_{2}
\exp\left(r\lambda^{(k+1)/s}-A_{2}\lambda^{k}S_{k}\right)
\qquad
\forall t\in\re,
\label{th:est-fk}
\end{equation}
where $A_{2}$ and $B_{2}$ are the constants that appear in (\ref{th:est-phi-psi}).

\end{enumerate}

\end{prop}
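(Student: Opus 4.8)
The plan is to obtain $u_k$ and $f_k$ directly from the basic bridge of Proposition~\ref{prop:transition}, applied with the parameter $S:=\lambda^{k}S_{k}$, via a parabolic rescaling in both time and amplitude. Concretely, I would set
\[
u_{k}(t):=\frac{1}{\lambda^{k}}\,v_{S}(\lambda^{k}t)\,e_{k}+\frac{1}{\lambda^{k}}\,w_{S}(\lambda^{k}t)\,e_{k+1},
\]
and define the forcing term by the companion rescaling
\[
f_{k}(t):=\lambda^{k}\varphi_{S}(\lambda^{k}t)\,e_{k}+\lambda^{k}\psi_{S}(\lambda^{k}t)\,e_{k+1}.
\]
Since $v_{S},w_{S}$ are of class $C^{2}$ and $\varphi_{S},\psi_{S}$ are continuous, the regularity statement (\ref{th:reg-uk-fk}) is immediate, and both functions take values in $\Span(e_{k},e_{k+1})$ by construction.

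The conceptual heart of the matter is the verification of the Kirchhoff equation (\ref{th:eqn-uk-fk}). Writing $\tau:=\lambda^{k}t$, the eigenvalue relations $Ae_{k}=\lambda^{2k}e_{k}$ and $Ae_{k+1}=\lambda^{2k+2}e_{k+1}$ yield the crucial scale invariance
\[
|A^{1/2}u_{k}(t)|^{2}=\lambda^{2k}\cdot\frac{1}{\lambda^{2k}}\left(v_{S}(\tau)^{2}+\lambda^{2}w_{S}(\tau)^{2}\right)=v_{S}(\tau)^{2}+\lambda^{2}w_{S}(\tau)^{2},
\]
so that the argument of $m$ is \emph{exactly} the one appearing in the basic system (\ref{th:eqn-vSwS}). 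Each time derivative produces a factor $\lambda^{k}$ that compensates the amplitude factor $\lambda^{-k}$; projecting onto $e_{k}$ and $e_{k+1}$ and dividing by $\lambda^{k}$, the two components of (\ref{th:eqn-uk-fk}) reduce precisely to the two scalar equations (\ref{th:eqn-vSwS}) evaluated at $\tau$. This is where I would be most careful, but it is a transparent computation once the scale invariance above is in hand.

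Next I would transcribe the conditions at infinity and the support of the forcing through the rescaling. Because $S=\lambda^{k}S_{k}$, the cutoff thresholds $\pm 2S$ in the $\tau$-variable become $\pm 2S_{k}$ in the $t$-variable, which gives (\ref{th:fk=0}) directly. For (\ref{th:uk-neg})--(\ref{th:uk-pos}) I would invoke the defining scaling of the simple modes $z_{\mu}(t)=\mu^{-1}z_{1}(\mu t)$: a one-line check shows $\lambda^{-k}z_{1}(\lambda^{k}t-\tau_{0})=z_{\lambda^{k}}(t-\tau_{0}/\lambda^{k})$ and $\lambda^{-k}z_{\lambda}(\lambda^{k}t-\tau_{1})=z_{\lambda^{k+1}}(t-\tau_{1}/\lambda^{k})$, which produces the stated limiting modes with the correct phase shifts. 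The intermediate times follow by setting $S_{1,k}:=S_{1}/\lambda^{k}$ and $S_{2,k}:=S_{2}/\lambda^{k}$; the interval memberships come from $\pi_{\lambda^{k}}=\pi_{1}/\lambda^{k}$ and $\pi_{\lambda^{k+1}}=\pi_{\lambda}/\lambda^{k}$, and evaluating the rescaled functions (noting that $u_{k}'(t)=v_{S}'(\lambda^{k}t)\,e_{k}+w_{S}'(\lambda^{k}t)\,e_{k+1}$ carries no net power of $\lambda$) reproduces (\ref{th:uk-S1k})--(\ref{th:uk-S2k}).

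Finally, for the Gevrey estimate (\ref{th:est-fk}) I would start from the pointwise bound (\ref{th:est-phi-psi}) with $S=\lambda^{k}S_{k}$, so that $1/S^{2}+1=1/(\lambda^{2k}S_{k}^{2})+1$ and the decay factor becomes $\exp(-A_{2}\lambda^{k}S_{k})$. Since the eigenvalue attached to $e_{j}$ is $\lambda_{j}=\lambda^{j}$, the Gevrey weights are $\exp(r\lambda^{k/s})$ and $\exp(r\lambda^{(k+1)/s})$; bounding the smaller by the larger and absorbing the amplitude factor through $\lambda^{2k}(\lambda^{-2k}S_{k}^{-2}+1)^{2}=(\lambda^{-k}S_{k}^{-2}+\lambda^{k})^{2}$ yields exactly (\ref{th:est-fk}). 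The only genuinely delicate bookkeeping is in this last step, where the \emph{growth} of the Gevrey weight $\exp(r\lambda^{(k+1)/s})$ must be kept separate from the \emph{decay} $\exp(-A_{2}\lambda^{k}S_{k})$ coming from the length of the transition; the rest is a mechanical transfer of Proposition~\ref{prop:transition} under the rescaling.
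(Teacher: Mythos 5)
Your proposal is correct and follows essentially the same route as the paper: apply Proposition~\ref{prop:transition} with $S=\lambda^{k}S_{k}$, define $u_{k}$ and $f_{k}$ by the same amplitude/time rescaling, and verify the equation, the limiting simple modes, and the Gevrey bound exactly as in the paper's proof (including the key invariance $|A^{1/2}u_{k}(t)|^{2}=v_{S}(\lambda^{k}t)^{2}+\lambda^{2}w_{S}(\lambda^{k}t)^{2}$ and the factor bookkeeping $\lambda^{2k}\left(\lambda^{-2k}S_{k}^{-2}+1\right)^{2}=\left(\lambda^{-k}S_{k}^{-2}+\lambda^{k}\right)^{2}$).
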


\begin{proof}

Let us apply Proposition~\ref{prop:transition} with
\begin{equation}
S:=\lambda^{k}S_{k},
\nonumber
\end{equation}
and let $v_{S}$, $w_{S}$, $\varphi_{S}$ and $\psi_{S}$ be the functions that we obtain. At this point let us set
\begin{equation}
u_{k}(t):=\frac{1}{\lambda^{k}}v_{S}(\lambda^{k}t)e_{k}+\frac{1}{\lambda^{k}}w_{S}(\lambda^{k}t)e_{k+1}
\nonumber
\end{equation}
and
\begin{equation}
f_{k}(t):=\lambda^{k}\varphi_{S}(\lambda^{k}t)e_{k}+\lambda^{k}\psi_{S}(\lambda^{k}t)e_{k+1}.
\label{defn:fk}
\end{equation}

Both the regularity (\ref{th:reg-uk-fk}) and equation (\ref{th:eqn-uk-fk}) follow from these definitions and from the corresponding properties of $v_{S}$, $w_{S}$, $\varphi_{S}$ and $\psi_{S}$.

Moreover, for every $t\leq -2S_{k}$ it turns out that $\lambda^{k}t\leq -2S$, and therefore from (\ref{th:v-w-neg}) we obtain that
\begin{equation}
u_{k}(t)=
\frac{1}{\lambda^{k}}z_{1}\left(\lambda^{k}t-\tau_{0}\right)e_{k}=
\frac{1}{\lambda^{k}}z_{1}\left(\lambda^{k}\left(t-\frac{\tau_{0}}{\lambda_{k}}\right)\right)e_{k}=
z_{\lambda^{k}}\left(t-\frac{\tau_{0}}{\lambda^{k}}\right)e_{k},
\nonumber
\end{equation}
which proves (\ref{th:uk-neg}). Similarly, for every $t\geq 2S_{k}$ it turns out that $\lambda^{k}t\geq 2S$, and therefore from (\ref{th:v-w-pos}) we obtain that
\begin{equation}
u_{k}(t)=
\frac{1}{\lambda^{k}}z_{\lambda}\left(\lambda^{k}t-\tau_{1}\right)e_{k+1}=
\frac{1}{\lambda^{k+1}}z_{1}\left(\lambda^{k+1}t-\lambda\tau_{1}\right)e_{k+1}=
z_{\lambda^{k+1}}\left(t-\frac{\tau_{1}}{\lambda^{k}}\right)e_{k+1},
\nonumber
\end{equation}
which proves (\ref{th:uk-pos}).

Finally, (\ref{th:fk=0}) follows from (\ref{th:phi-psi-0}), while from (\ref{defn:fk}) and (\ref{th:est-phi-psi}) we obtain that
\begin{eqnarray*}
\|f_{k}(t)\|_{\mathcal{G}_{s,r}(A)}^{2} & \leq &
\lambda^{2k}|\varphi_{S}(\lambda^{k}t)|^{2}\exp\left(r\lambda^{k/s}\right)
+\lambda^{2k}|\psi_{S}(\lambda^{k}t)|^{2}\exp\left(r\lambda^{(k+1)/s}\right)
\\[0.5ex]
& \leq &
\left(\frac{1}{\lambda^{k}S_{k}^{2}}+\lambda^{k}\right)^{2}B_{2}
\exp\left(-A_{2}\lambda^{k}S_{k}\right)\exp\left(r\lambda^{(k+1)/s}\right)
\end{eqnarray*}
for every $t\in\re$, which proves (\ref{th:est-fk}).
\end{proof}


\subsubsection*{Conclusion of the proof of Theorem~\ref{thm:main}}

\paragraph{\textmd{\textit{Definitions}}}

For every integer $k\geq 0$ we apply Proposition~\ref{prop:iteration} with
\begin{equation}
S_{k}:=\frac{1}{(k+1)^{2}}.
\nonumber
\end{equation}

We consider the functions $u_{k}$ and $f_{k}$, and the times $S_{1,k}$ and $S_{2,k}$, provided by that result. We consider the sequence $\{T_{k}\}\subseteq[0,+\infty)$ defined by $T_{0}:=0$ and
\begin{equation}
T_{k+1}:=T_{k}+S_{1,k}+S_{2,k}
\qquad
\forall k\geq 0.
\nonumber
\end{equation}

We observe that
\begin{equation}
S_{1,k}\in [2S_{k},2S_{k}+\pi_{\lambda^{k}}]
\qquad\text{and}\qquad
S_{2,k}\in[2S_{k},2S_{k}+\pi_{\lambda^{k+1}}],
\nonumber
\end{equation}
and therefore
\begin{equation}
T_{k+1}-T_{k}=
S_{1,k}+S_{2,k}\leq
\frac{4}{(k+1)^{2}}+\pi_{\lambda^{k}}+\pi_{\lambda^{k+1}}=
\frac{4}{(k+1)^{2}}+\pi_{1}\left(\frac{1}{\lambda^{k}}+\frac{1}{\lambda^{k+1}}\right).
\nonumber
\end{equation}

As a consequence, the sequence $\{T_{k}\}\subseteq[0,+\infty)$ is increasing and 
\begin{equation}
T_{\infty}:=\lim_{k\to +\infty}T_{k}<+\infty.
\nonumber
\end{equation}

Let us define $u:[0,T_{\infty})\to H$ by
\begin{equation}
u(t):=u_{k}(t-T_{k}-S_{1,k})
\qquad
 \text{if }t\in[T_{k},T_{k+1}]\text{ for some }k\geq 0,
\nonumber
\end{equation}
and let us define $f:[0,+\infty)\to H$ by
\begin{equation}
f(t):=\begin{cases}
f_{k}(t-T_{k}-S_{1,k})      & \text{if }t\in[T_{k},T_{k+1}]\text{ for some }k\geq 0, 
\\[0.5ex]
0      & \text{if }t\geq T_{\infty}.
\end{cases}
\nonumber
\end{equation}

We claim that the conclusions of Theorem~\ref{thm:main} hold true with these choices.

\paragraph{\textmd{\textit{Regularity of $f$}}}

To begin with, we observe that the definition is well-posed because from (\ref{th:fk=0}) we know that
\begin{equation}
f_{k}(T_{k+1}-T_{k}-S_{1,k})=
f_{k}(S_{2,k})=
0=
f_{k+1}(-S_{1,k+1})=
f_{k+1}(T_{k+1}-T_{k+1}-S_{1,k+1}).
\nonumber
\end{equation}

Moreover, from (\ref{th:est-fk}) we deduce that
\begin{equation}
\|f_{k}(t)\|_{\mathcal{G}_{r,s}(A)}^{2}\leq
\left(\frac{(k+1)^{4}}{\lambda^{k}}+\lambda^{k}\right)^{2}B_{2}
\exp\left(r\lambda^{(k+1)/s}-A_{2}\frac{\lambda^{k}}{(k+1)^{2}}\right)
\qquad
\forall t\in[T_{k},T_{k+1}]
\nonumber
\end{equation}
for every positive value of $s$ and $r$. This implies in particular that
\begin{equation}
\forall s>1
\quad
\forall r>0
\qquad
\lim_{t\to T_{\infty}^{-}}f(t)=0
\quad
\text{in }\mathcal{G}_{s,r}(A),
\nonumber
\end{equation}
which shows that $f$ has the regularity (\ref{th:reg-f}).

\paragraph{\textmd{\textit{Regularity of $u$}}}

To begin with, we observe that the definition is well-posed because from (\ref{th:uk-S2k}) with $k$ and (\ref{th:uk-S1k}) with $k+1$ we know that
\begin{equation}
u_{k}(T_{k+1}-T_{k}-S_{1,k})=
u_{k}(S_{2,k})=
0=
u_{k+1}(-S_{1,k+1})=
u_{k+1}(T_{k+1}-T_{k+1}-S_{1,k+1}),
\nonumber
\end{equation}
and
\begin{equation}
u_{k}'(T_{k+1}-T_{k}-S_{1,k})=
u_{k}'(S_{2,k})=
H_{0}e_{k+1}=
u_{k+1}'(-S_{1,k+1})=
u_{k+1}'(T_{k+1}-T_{k+1}-S_{1,k+1}).
\nonumber
\end{equation}

In the same way from (\ref{th:eqn-uk-fk}) and (\ref{th:fk=0}) we obtain that
\begin{equation}
u_{k}''(T_{k+1}-T_{k}-S_{1,k})=0=u_{k+1}''(T_{k+1}-T_{k+1}-S_{1,k+1}),
\nonumber
\end{equation}
and therefore the regularity of $u$ follows from the regularity of $u_{k}$.

\paragraph{\textmd{\textit{Blow-up of $u$}}}

To this end, it is enough to observe that
\begin{eqnarray*}
\limsup_{t\to T_{\infty}^{-}}
\left(|A^{\alpha}u'(t)|^{2}+|A^{\alpha+1/2}u(t)|^{2}\right)
& \geq &
\limsup_{k\to +\infty}|A^{\alpha}u'(T_{k})|
\\
& = &
\limsup_{k\to +\infty}|A^{\alpha}(H_{0}e_{k})|^{2}
\\
& = &
\limsup_{k\to +\infty}H_{0}^{2}\lambda^{4k\alpha}
\\
& = & 
+\infty
\end{eqnarray*}
for every $\alpha>0$, which proves (\ref{th:bu-energy}).

In the same way the sequence $u'(T_{k})=H_{0}e_{k}$ has no limit as $k\to +\infty$, and therefore the limit of $u'(t)$ as $t\to T_{\infty}^{-}$ does not exist, which proves (\ref{th:no-limit}).
\qed


\subsubsection*{\centering Acknowledgments}

Both authors are members of the Italian {\selectlanguage{italian}%
``Gruppo Nazionale per l'Analisi Matematica, la Probabilit\`{a} e le loro Applicazioni'' (GNAMPA) of the ``Istituto Nazionale di Alta Matematica'' (INdAM)}. The first author was partially supported by PRIN 2020XB3EFL, ``Hamiltonian and Dispersive PDEs''.

\selectlanguage{english}



\label{NumeroPagine}

\end{document}